%
%

\documentclass[11pt,a4paper]{amsart}
\textwidth=135mm

\usepackage{amsmath}
\usepackage[utf8]{inputenc}
\usepackage[english]{babel}
\usepackage[T1]{fontenc}
\usepackage{url}
\usepackage{mathrsfs}
\usepackage{ifthen}
\usepackage{tikz}
\usepackage{ amssymb }
\usepackage{lscape}
\usepackage{multirow}

\usepackage{caption}

\title[The 2-colored operad on the set of singular meanders]{The 2-colored operad on the set of singular meanders}
\author{Yury Belousov}
    \thanks{This research was supported by the Ministry of Science and Higher Education of the
Russian Federation, agreement 075-15-2019-1620 date 08/11/2019 and 075-15-2022-289 date 06/04/2022.
The author is partially supported by International Laboratory of Cluster Geometry NRU HSE, RF Government grant, ag. № 075-15-2021-608 from 08.06.2021}

\address{Yury Belousov\\
Faculty of Mathematics, National Research University HSE}
\email{bus99@yandex.ru}

\newcommand \ifdraw {\iftrue}

\newcommand \R  {\mathbb R}
\newcommand \M  {\mathcal{M}}
\newcommand \ord{\operatorname{ord}}

\newtheorem{thm}{Theorem}

\newtheorem{lemma}{Lemma}

\newtheorem{corollary}{Corollary}

\theoremstyle{definition}
\newtheorem{definition}{Definition}

\theoremstyle{remark}
\newtheorem*{remark}{Remark}

\begin{document}
\maketitle

\begin{abstract}
In the recent work~\cite{B21} we introduced a geometric decomposition of meanders. In the present paper, we generalize this approach to the case of singular meanders and give a more algebraic description of this decomposition.
\end{abstract} 

\maketitle

\section{Introduction}
This work is devoted to the study of singular meanders (examples of singular meanders can be found in Figures~\ref{fig: examples of meanders} and~\ref{fig: example of irreducible}). 
In the recent work~\cite{B21} we introduced new geometric decomposition of meanders. The key ingredients in this decomposition are the operations of insertion of one meander into another (see Definition~\ref{def:insert}). In the present paper we use these operations to define the structure of 2-colored operad on the set of singular meanders (Theorem~\ref{thm:operad}), and we show that each singular meander admits a canonical decomposition in terms of these operations (Theorem~\ref{thm:decompose}). As a corollary we find an equation on the generating function for singular meanders (Corollary~\ref{cor:gen function equation}). 

\subsection*{Acknowledgment}
The author thanks Andrei Malyutin for valuable suggestions and comments.





\section{Basic definitions}\label{sec: basic definitions}
The definitions below are reformulations of the definitions from the work~\cite[sec.~1]{B21} to the case of singular meanders. 
\begin{definition}
A \emph{singular meander} $(D, \{p_1, p_2,p_3,p_4\}, \{m, l\})$ is a triple of \begin{itemize}
    \item euclidean 2-dimensional disk $D$;
    \item four distinct points $p_1, p_2,p_3,p_4$ on the boundary $\partial D$ such that there exists a connected component of $\partial D \setminus \{p_1, p_2\}$ containing $\{p_3,p_4\}$;
    \item the images $m$ and $l$ of smooth proper embeddings of the segment $[0;\,1]$ into $D$ such that $\partial m = \{p_1,p_3\}$, $\partial l = \{p_2,p_4\}$, and $m$ and $l$ intersect (not necessary transversely) in a finite number of points.
\end{itemize}
If all the intersections between $m$ and $l$ are transverse we say that $M$ is a \emph{non-singular meander}. 
\end{definition}

\begin{definition}
We say that two singular meanders $$M=(D,\{p_1,p_2,p_3,p_4\}, \{m,l\})$$ and $$M'= (D',\{p_1',p_2',p_3',p_4'\},\{m',l'\})$$ are \emph{equivalent} if there exists a homeomorphism $f:D\to D'$ such that $f(m)=m'$, $f(l)=l'$, and $f(p_i)=p_i'$ for each $i=1,\dots,4$. 
\end{definition}
\begin{remark}
We will always draw singular meanders in such a way that $D$ is a euclidean disk in $\R^2$, $l$ is the horizontal diameter in $D$, $p_2$ is the left endpoint of $l$, and $p_1$ is always drawn above $p_2$. That's why we do not place $l$, $m$, $p_1,p_2,p_3,p_4$ in figures. Examples of singular meanders are given in Figures~\ref{fig: examples of meanders} and~\ref{fig: example of irreducible}.
\end{remark}

\begin{figure}[h]
    \centering
    \begin{tikzpicture}[scale = 1.3]
        \draw[thick] (0, 0) to (3, 0);
        \draw[ultra thick] (0.0476312, 0.375) to[out = 0, in = 90, distance = 32.9867] (2.625, 0)
         to[out = -90, in = -70, distance = 5.71239] (2.25, -0.12)
         to[out = 120, in = 70, distance = 4.71239] (1.875, -0.12)
         to[out = -120, in = -90, distance = 5.71239] (1.5, 0)
         to[out = 90, in = 90, distance = 4.71239] (1.125, 0)
         to[out = -90, in = -90, distance = 4.71239] (0.75, 0)
         to[out = 90, in = 90, distance = 4.71239] (0.375, 0)
        to[out = -90, in = 180, distance = 32.9867] (2.95237, -0.375);
        \draw[help lines] (1.5, 0) circle (1.5);
        \draw[fill] (0.0476312, 0.375) circle (0.05);
        \draw[fill] (2.95237, -0.375) circle (0.05);
        \draw[fill] (0, 0) circle (0.05);
        \draw[fill] (3, 0) circle (0.05);
    \end{tikzpicture}
    \hspace{1.5cm}
    \begin{tikzpicture}[scale = 1.3]
        \draw[thick] (0, 0) to (3, 0);
        \draw[ultra thick] (0.163915, 0.681818) to[out = 0, in = 90, distance = 5.14079] (0.409091, 0)
         to[out = -90, in = -90, distance = 1.7136] (0.545455, 0)
         to[out = 90, in = 90, distance = 5.14079] (0.954545, 0)
         to[out = -90, in = -90, distance = 11.9952] (1.90909, 0)
         to[out = 90, in = 90, distance = 1.7136] (2.04545, 0)
         to[out = -90, in = -90, distance = 15.4224] (0.818182, 0)
         to[out = 90, in = 90, distance = 1.7136] (0.681818, 0)
         to[out = -90, in = -90, distance = 18.8496] (2.18182, 0)
         to[out = 90, in = 90, distance = 5.14079] (1.77273, 0)
         to[out = -90, in = -90, distance = 1.7136] (1.63636, 0)
         to[out = 90, in = 90, distance = 8.56798] (2.31818, 0)
         to[out = -90, in = -90, distance = 25.7039] (0.272727, 0)
         to[out = 90, in = 90, distance = 1.7136] (0.136364, 0)
         to[out = -90, in = -90, distance = 29.1311] (2.45455, 0)
         to[out = 90, in = 90, distance = 11.9952] (1.5, 0)
         to[out = -90, in = -90, distance = 5.14079] (1.09091, 0)
         to[out = 90, in = 90, distance = 22.2767] (2.86364, 0)
         to[out = -90, in = -90, distance = 1.7136] (2.72727, 0)
         to[out = 90, in = 90, distance = 18.8496] (1.22727, 0)
         to[out = -90, in = -90, distance = 1.7136] (1.36364, 0)
         to[out = 90, in = 90, distance = 15.4224] (2.59091, 0)
        to[out = -90, in = 180, distance = 5.14079] (2.72643, -0.863636, 0);
        \draw[help lines] (1.5, 0) circle (1.5);
        \draw[fill] (0.163915, 0.681818) circle (0.05);
        \draw[fill] (2.72643, -0.863636) circle (0.05);
        \draw[fill] (0, 0) circle (0.05);
        \draw[fill] (3, 0) circle (0.05);
    \end{tikzpicture}
    \caption{Examples of singular meanders}
    \captionsetup{aboveskip=0pt,font=it}
    \label{fig: examples of meanders}
\end{figure}
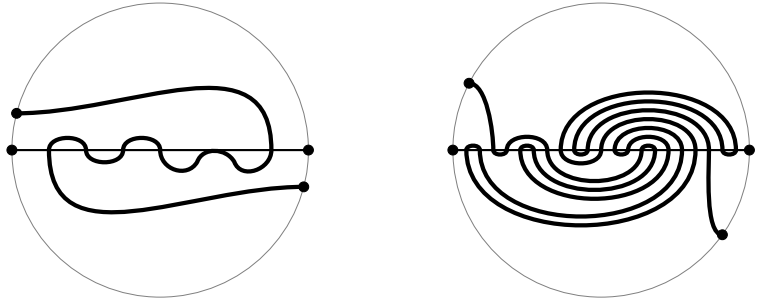

\begin{definition}
Let $M=(D, \{p_1, p_2,p_3,p_4\}, \{m, l\})$ be a singular meander, let $n_{\mathrm{t}}(M)$ be the number of transverse intersections of $m$ and $l$, and let $n_{\mathrm{nt}}(M)$ be the number of non-transverse intersections of $m$ and $l$. The \emph{order of $M$} (denoted by $\ord (M)$) is the pair $(n, k) := \left(\max_{M'\in [M]} n_{\mathrm{t}}(M'), \min_{M'\in [M]} n_{\mathrm{nt}}(M')\right)$, where $[M]$ is the set of all singular meanders that are equivalent to $M$. If the order of $M$ is $(n, k)$ then the \emph{total order} of $M$ is $n+k$. By $\M_{n, k}$ we denote the set of all equivalence classes of singular meanders of order $(n, k)$. 
\end{definition}

\begin{remark}
Without loss of generality further in this paper we always assume that if we are given a singular meander $M$ of order $(n,k)$, we have $n_{\mathrm{t}}(M)=n$ and $n_{\mathrm{nt}}(M) = k$.
\end{remark}

\begin{definition}
We say that a singular meander $M'=(D', \{p_1', p_2',p_3',p_4'\}, \{m', l'\})$ is a \emph{submeander} of a singular meander $M=(D, \{p_1, p_2,p_3,p_4\}, \{m, l\})$ if \begin{itemize}
    \item $D' \subseteq D$;
    \item $m' = D' \cap m$;
    \item $l' = D' \cap l$;
    \item $p_1' = \gamma\left(t\right)$, where  $\gamma: [0;1] \to D$ is any continuous map such that $\gamma([0;1]) = m$, $\gamma(0) = p_1$, and $t=\min \{x\in [0;1]\ |\ \gamma(x) \in D'\}$;
    \item If $\ord(M') = (2n+1, k)$ then $p_2'= \gamma'\left(t'\right)$, where  $\gamma': [0;1] \to D$ is any continuous map such that $\gamma'([0;1]) = l$, $\gamma'(0) = p_2$, and ${t'=\min \{x\in [0;1]\ |\ \gamma'(x) \in D'\}}$.
\end{itemize}
\end{definition}

\begin{definition}
Let $$M'=(D', \{p_1', p_2',p_3',p_4'\}, \{m', l'\})$$ and  $$M''=(D'', \{p_1'', p_2'',p_3'',p_4''\}, \{m'', l''\})$$ be two submeanders of a singular meander $$M=(D, \{p_1, p_2,p_3,p_4\}, \{m, l\}).$$ We say that $M'$ and $M''$ are \emph{equivalent with respect to $M$} if $D'\cap m\cap l = D'' \cap m \cap l$.
\end{definition}
\begin{remark}
Notice that submeanders $M'$ and $M''$ of $M$ can be equivalent as singular meanders, but non-equivalent as submeanders with respect to $M$. But if $M'$ and $M''$ are equivalent with respect to $M$ they are also equivalent. 
\end{remark}

\begin{remark}
The notion of submeanders can be used to define a category on the set of all singular meanders. We define this category as follows. The set of objects is just the set of all classes of equivalence of singular meanders, and the set of morphisms from object $m$ to $m'$ is the set of submeanders of $m'$ that are equal (as singular meanders) to $m$. In this category all small limits and small colimits exist. Moreover, a singular meander of total complexity zero is the initial object in this category. 
\end{remark}


\begin{definition} \label{def:insert}
Let $$M = (D, \{p_1, p_2,p_3,p_4\}, \{m, l\})$$ and $$M' = (D', \{p_1', p_2',p_3',p_4'\}, \{m', l'\})$$ be two singular meanders of order $(n, k)$ and $(n',k')$ respectively, and let 
$$M'' = (D'', \{p_1'', p_2'',p_3'',p_4''\}, \{m'', l''\})$$
be a submeander of $M$ such that (i) $\ord(M'') = (n'',k'')$, (ii)
$n' \equiv n''\ \mathrm{mod}\ 2$. 
Consider a map $f:\partial D'' \to \partial D'$ such that $f({p}_i'')=p'_i$ for each $i=1,\dots,4$. 
There is a well-defined singular meander $$\tilde{M} = (\tilde{D}, \{p_1, p_2, p_3, p_4\}, \{\tilde{m}, \tilde{l}\})$$ where  
\begin{itemize}
    \item $\tilde{D} = \big(D\setminus \operatorname{Int}(D'')\big) \cup_f D'$;
    \item $\tilde{m} = \big(m\setminus \operatorname{Int}(D''\cap m)\big)\cup_f m'$;
    \item $\tilde{l} = \big(l\setminus \operatorname{Int}(D''\cap l)\big)\cup_f l'$.
\end{itemize}
We say that $\tilde{M}$ is obtained by the \emph{insertion of $M'$ into $M$ at $M''$}. 
\end{definition}




\section{Operad}
The insertion of a singular meander can be viewed as an operation on the set of singular meanders. To define this operation, we need to label all of the intersection points of a given singular meander. We will do this as follows. Let  
$$M = (D, \{p_1, p_2,p_3,p_4\}, \{m, l\})$$
be a singular meander of order $(n, k)$. Consider a bijective map $\gamma: [0;\,n+1] \to D$, such that 
(i) $\gamma([0;\, n+1]) = l$, 
(ii) $\gamma(0) = p_2$,
(iii) $\gamma(t)$ is an intersection of $m$ an $l$ if and only if $t \in \{1,2,\dots n\}$. We say that an intersection point $p$ of $m$ and $l$ has the label $k$ if $\gamma(k) =p$.
Notice that this ordering of intersection points does not depend on the choice of $\gamma$. Analogously, we define the labeling on the non-transverse intersections of $m$ and $l$.

Now we define the operations on the set of all singular meanders.
Let $M$ be a singular meander of order $(n, k)$ and let $M|_i$ be a submeander of $M$ with the only transverse intersection point with label $i$. Let $M'$ be a singular meander of order $(2n'+1, k')$. 
Then let $M\circ_i M'$ be a singular meander of order $(n+2n', k+k')$ obtained by the insertion of $M'$ into $M$ at a singular meander $M|_i$. Notice that these operations give well-defined operations on the set of all equivalence classes of singular meanders. 
Analogously, we can define an operation $M \bullet_i M'$ --- the insertion of singular meander $M'$ of order $(2n', k')$  into $M$ at the singular meander $M|_i$, where $M|_i$ is the submeander of $M$ with the only non-transverse intersection point with label $i$.

The straightforward check shows that these operations form a 2-colored operad (for the definition of a colored operad see, for example, ~\cite{LV12, Y16}). 
\begin{thm}\label{thm:operad}
The set $\mathcal{M} = \bigcup\limits_{n\geq 0, k \geq 0} \mathcal{M}_{n,k}$ together with the set of operations
\begin{align}\label{operation 1}
    &\circ_i: \M_{n,k} \times \M_{2n'+1, k'} \to \M_{n+2n', k+k'} &  n \geq 1,\ k \geq 0,\ 1\leq i\leq n,\\
\label{operation 2} &\bullet_i: \M_{n,k} \times \M_{2n', k'} \to \M_{n+2n', k+k'-1}  &n \geq 0,\ k \geq 1,\   1\leq i \leq k.
\end{align}
form a 2-colored operad.
\end{thm}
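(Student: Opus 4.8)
The plan is to verify the axioms of a 2-colored operad directly, treating the single color "transverse" (t) as the color of operations $\circ_i$ and the single color "non-transverse" (nt) as the color of the operations $\bullet_i$, with each singular meander of order $(n,k)$ regarded as an operation with $n$ inputs of color t, $k$ inputs of color nt, and output color determined by the parity convention built into Definition~\ref{def:insert}. First I would make the coloring bookkeeping precise: an element of $\M_{n,k}$ has $n$ labeled t-inputs (the transverse intersection points, ordered along $l$ as in the labeling construction above) and $k$ labeled nt-inputs (the non-transverse ones, ordered analogously); the arity-parity condition $n'\equiv n''\bmod 2$ from Definition~\ref{def:insert} is exactly what makes $\circ_i$ (insertion of an odd-order meander at a one-transverse-point submeander $M|_i$) and $\bullet_i$ (insertion of an even-order meander at a one-non-transverse-point submeander) always defined, and one checks the output orders $(n+2n',k+k')$ and $(n+2n',k+k'-1)$ are as claimed by counting how many transverse/non-transverse intersection points the glued curves $\tilde m,\tilde l$ carry. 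This reduces everything to showing (a) well-definedness on equivalence classes, (b) equivariance of the labelings under the relabeling action, and (c) the two associativity-type axioms: sequential composition and parallel composition.

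For (a), the key point is that the submeander $M|_i$ used in the insertion is determined up to equivalence-with-respect-to-$M$ by the single intersection point it contains, so by the remark following the definition of equivalence-with-respect-to-$M$ the resulting $\tilde M$ is determined up to equivalence of singular meanders; I would also note that the gluing map $f\colon\partial D''\to\partial D'$ is unique up to isotopy given the matching of the marked points, which is what the hypothesis on the $p_i$ in Definition~\ref{def:insert} guarantees. For (b), I would observe that the labeling of intersection points is intrinsic (it does not depend on the choice of parametrizing $\gamma$, as already noted), hence the obvious reindexing identities hold: inserting $M'$ at the $i$-th point of $M$ shifts the labels of the remaining points of $M$ by $+2n'$ (resp. $+2n'$, with the $i$-th nt-label deleted) past position $i$, and the inserted $M'$ contributes its own points in order at positions $i,i+1,\dots$; this is a finite, explicit computation along the arc $l$.

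For (c), the heart of the matter is the exchange identity: given $M$, two further meanders $M',M''$ and two distinct intersection points of $M$ (of either color), the two meanders obtained by inserting at those points in either order agree. This is clear because the two insertions take place in disjoint subdisks of $D$ (the submeanders $M|_i$ and $M|_j$ for $i\neq j$ can be chosen with $D_i\cap D_j\cap m\cap l=\varnothing$ and hence, up to equivalence-with-respect-to-$M$, disjoint), so the two surgeries commute on the nose; the associativity identity — inserting into $M'$ first at its point then into $M$, versus inserting $M'$ into $M$ then the grandchild into the result — follows because $\big((D\setminus\operatorname{Int}D'')\cup_f D'\big)$ is an associative pushout operation and the induced curves are the same regardless of the order of gluing. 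The unit is the meander of order $(1,0)$ consisting of two arcs crossing once transversally: inserting it at any transverse point is the identity, which is immediate from the definitions.

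The main obstacle I anticipate is not any single deep step but making the label-shift formulas in step (b) unambiguous when the colors interleave — one must be careful that $\circ_i$ only acts on t-labels while $\bullet_i$ only acts on nt-labels, and that inserting an odd-order meander at a transverse point can create new non-transverse intersections only in the glued region (so it does not disturb the nt-labeling outside), and symmetrically; verifying that these two labelings evolve independently under the two families of operations is the one place where the parity condition $n'\equiv n''\bmod 2$ is genuinely used and where a sign/index error is easy to make. Once this bookkeeping is pinned down, the operad axioms reduce to the evident commutativity and associativity of disjoint/iterated disk surgery, and the theorem follows.
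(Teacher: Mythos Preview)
Your approach is exactly what the paper does: it states that ``the straightforward check shows that these operations form a 2-colored operad'' and gives no further argument, so your sketch of the direct verification of well-definedness, reindexing, associativity/exchange, and unit is precisely the content the paper leaves implicit. One small point: in a 2-colored operad there is a unit for each color, so besides the $(1,0)$ meander serving as the t-unit you should also record that the $(0,1)$ meander (a single tangency) is the nt-unit for $\bullet_i$.
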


\section{Unique decomposition into prime factors}
In the work~\cite{B21} it was shown that any meander admits a canonical decomposition into snakes and irreducible meanders. Now we are going to generalize this result to the case of singular meanders. For this we need to define two building blocks of this decomposition: iterated snakes and irreducible meanders. 

Later in the paper we use the following notation. Let $\{A_{n,k}\}_{n\geq 0,\, k\geq 0}$ be some sequence of numbers, and let $f(x,t) = \sum\limits_{n\geq 0, k\geq 0} A_{n,k}x^n t^k$ be the generating function for this sequence. 
We will decompose $f(x,t)$ in two parts (the <<odd>> part $f^{(1)}(x,t)$ and the <<even>> part $f^{(2)}(x,t)$) in the following way:
$$
f(x, t) = \underbrace{\sum\limits_{n\geq 0, k\geq 0} A_{2n+1,k}x^{2n+1} t^k}_{=:f^{(1)}(x,t)} + \underbrace{\sum\limits_{n\geq 0, k\geq 0} A_{2n,k}x^{2n} t^k}_{=:f^{(2)}(x,t)}.
$$

Another notation that we are going to use is the following. Let $f(x,t)$ and $g(x,t)$ be generating functions of some sequences. Then we will use the following notation 
$$
\left(f \boxdot g\right) (x,t):= f\left(g^{(1)}(x,t),\, g^{(2)}(x,t)\right).
$$ 

\subsection{Iterated snakes}
\begin{definition}
We say that a singular meander $M$ of total order $N$ is a \emph{snake}, if there are precisely $\frac{N(N+1)}{2} + 1$ submeanders of $M$ that are pairwise non-equivalent with respect to $M$.
\end{definition}

Class of snakes can be easily described by the following lemma from the work~\cite{B21}.

\begin{lemma}[\cite{B21}]
Let $M$ be a singular meander of total order $N$. Then $M$ is a snake if and only if its permutation is either $(1, 2, \dots, N)$ or $(N, N-1, \dots , 1)$.
\end{lemma}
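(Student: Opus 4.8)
The plan is to reduce the defining condition of a snake to a statement about the permutation $\sigma$ of $M$, where I label the $N$ intersection points of $m$ and $l$ by $1,\dots,N$ in the order in which they occur along $l$ (starting from $p_2$), and let $\sigma(s)$ be the label of the $s$-th intersection point met while traversing $m$ from $p_1$. First note that the quantity ``number of submeanders of $M$ pairwise non-equivalent with respect to $M$'' depends only on $[M]$: any equivalence $M\to M_1$ carries submeanders to submeanders and respects the relation $D'\cap m\cap l=D''\cap m\cap l$, so we may compute it using any convenient representative.

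The first step is to identify the equivalence classes of submeanders with suitable subsets of $\{1,\dots,N\}$. Since two submeanders are equivalent with respect to $M$ exactly when they cut out the same subset $S=D'\cap m\cap l$ of $m\cap l$, it suffices to describe which $S$ arise this way. For a submeander $M'$ the set $m'=D'\cap m$ is a single sub-arc of $m$, hence $S=m'\cap l$ is a block of consecutive intersection points in the order along $m$; symmetrically, $S=l'\cap m$ is a block of consecutive intersection points in the order along $l$. Call such an $S$ a \emph{bi-interval}: an interval of $\{1,\dots,N\}$ whose image under $\sigma^{-1}$ is again an interval. Conversely, I would show every bi-interval, and the empty set, is realised by a submeander: for a bi-interval $S$ pick sub-arcs $m_S\subseteq m$ and $l_S\subseteq l$ each meeting $m\cap l$ in exactly $S$ (possible because $S$ is an interval for both orders), and take $D'$ to be a thin neighbourhood of $m_S\cup l_S$ with the bigons cut off by $m_S$ and $l_S$ filled in one at a time, starting from an innermost one, so that no other portion of $m$ or $l$ is absorbed; a short check then gives that $D'$ is a disk with $D'\cap m=m_S$ and $D'\cap l=l_S$, and the four marked points can be placed on $\partial D'$ as the definition of a submeander requires. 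The empty set is realised by a small disk meeting $m$ and $l$ in two short disjoint arcs, available for a suitable representative of $[M]$. Hence the number of equivalence classes of submeanders of $M$ equals the number of bi-intervals, plus one for $S=\varnothing$.

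Then comes the counting. There are exactly $\frac{N(N+1)}{2}+1$ intervals of $\{1,\dots,N\}$ if one includes the empty one, and the bi-intervals form a subset of them; so $M$ has at most $\frac{N(N+1)}{2}+1$ pairwise non-equivalent submeanders, with equality --- that is, $M$ is a snake --- if and only if every interval of $\{1,\dots,N\}$ is a bi-interval, i.e. $\sigma^{-1}$ sends every interval to an interval. Specialising this to the two-element intervals $\{i,i+1\}$ forces $|\sigma^{-1}(i)-\sigma^{-1}(i+1)|=1$ for all $i$, so $\sigma^{-1}(1),\dots,\sigma^{-1}(N)$ is a unit-step walk on $\mathbb Z$ hitting each of $1,\dots,N$ exactly once, which cannot reverse direction without repeating a value and is therefore $1,2,\dots,N$ or $N,N-1,\dots,1$; hence the same holds for $\sigma$. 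Conversely, if $\sigma$ is the identity or the reversal then the orders along $m$ and along $l$ coincide up to reversal, every interval is a bi-interval, and $M$ has exactly $\frac{N(N+1)}{2}+1$ pairwise non-equivalent submeanders --- here the realisation of each interval promised in the first step is immediate from the shape of a snake, so no delicate topology is needed for this direction. Combining the two directions yields the lemma.

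The step I expect to demand the most care is the converse part of the identification in the first step: building, for an arbitrary bi-interval, a sub-disk that meets $m$ and $l$ in exactly the prescribed sub-arcs and nothing else (the ``fill in the bigons from innermost outward'' construction), together with the bookkeeping needed to place the four boundary marked points so that the resulting object genuinely satisfies the definition of a submeander. As noted above, however, only the trivial monotone case of this construction is actually required for the implication ``monotone permutation $\Rightarrow$ snake'', so these topological subtleties are confined to the easy direction and do not obstruct the combinatorial core of the argument.
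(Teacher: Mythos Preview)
The paper does not give its own proof of this lemma; it is simply quoted from \cite{B21}. So there is nothing to compare your argument against here.

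That said, your argument is sound. The identification of equivalence classes of submeanders with ``bi-intervals'' (together with $\varnothing$) via the map $M'\mapsto D'\cap m\cap l$ is the right invariant: the forward direction follows immediately from the fact that $m'=D'\cap m$ and $l'=D'\cap l$ are single arcs, and the counting then gives the upper bound $\tfrac{N(N+1)}{2}+1$. Your reduction of equality to the condition ``$\sigma^{-1}$ sends every two-element interval to an interval'', and hence to monotonicity of $\sigma$, is clean and correct. You are also right to note that the only place where one must actually \emph{realise} subsets by submeanders is the implication ``$\sigma$ monotone $\Rightarrow$ snake'', where the meander is already in snake shape and the construction of a sub-disk for any interval $[a,b]$ (and for $\varnothing$, via a small disk near the boundary arc between $p_1$ and $p_2$) is immediate. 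The general bi-interval realisation you sketch is a stronger statement than the lemma requires; it is plausible but, as you observe, unnecessary for the result at hand.
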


Let us denote the number of equivalence classes of snakes of order $(n,k)$ by $M_{n,k}^{(S)}$. Then we have 
$$
M_{n,k}^{(S)} = \begin{cases}
0 & n=0 \text{ and } k=0, \\
1 & n=1 \text{ and } k=0, \\
\binom{n+k}{k} & n \text{ is even,} \\
2\binom{n+k}{k} & \text{otherwise}.
\end{cases}
$$
Now it is easy to write the generating function for the number of snakes.
\begin{corollary}
Let $\phi_{(S)}(x,t)$ be the generating function for the number of snakes. Then
$$
\phi_{(S)}(x,t) = \sum_{n\geq 0, k \geq 0}M_{n,k}^{(S)}x^nt^k =-x+ \frac{-t^2+t+x (x+2)}{(t-1)^2-x^2}.
$$
\end{corollary}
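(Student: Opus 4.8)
The plan is to compute the generating function directly from the explicit formula for $M_{n,k}^{(S)}$ by splitting the double sum according to the four cases. First I would isolate the exceptional terms: the case $n=0,k=0$ contributes $0$, and the case $n=1,k=0$ contributes $x$. The remaining terms come from the generic formula $\binom{n+k}{k}$ (when $n$ is even) and $2\binom{n+k}{k}$ (when $n$ is odd, including $n=1$), so it is convenient to write
$$
\phi_{(S)}(x,t) = -x + \sum_{\substack{n\geq 0,\, k\geq 0 \\ n \text{ even}}} \binom{n+k}{k} x^n t^k + \sum_{\substack{n\geq 0,\, k\geq 0 \\ n \text{ odd}}} 2\binom{n+k}{k} x^n t^k,
$$
where the $-x$ corrects for the fact that the $n=1,k=0$ term should be $1$, not $2$, and the $n=0,k=0$ term is correctly $\binom{0}{0}=1$ minus nothing — wait, that over-counts by $1$, so one must be slightly careful and the bookkeeping constant will fall out of the computation.

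The key analytic input is the standard identity $\sum_{n\geq 0,\,k\geq 0}\binom{n+k}{k} x^n t^k = \frac{1}{1-x-t}$, obtained by summing $\sum_k \binom{n+k}{k} t^k = (1-t)^{-(n+1)}$ over $n$ and then summing the geometric series in $x/(1-t)$. To extract the even part in $n$, I would use the projection $\sum_{n \text{ even}} a_n x^n = \tfrac12\big(F(x)+F(-x)\big)$ and $\sum_{n \text{ odd}} a_n x^n = \tfrac12\big(F(x)-F(-x)\big)$ applied to $F(x) = \frac{1}{1-x-t}$. Thus the even-$n$ part equals $\tfrac12\big(\frac{1}{1-x-t}+\frac{1}{1+x-t}\big)$ and the odd-$n$ part equals $\tfrac12\big(\frac{1}{1-x-t}-\frac{1}{1+x-t}\big)$. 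Therefore
$$
\sum_{n \text{ even}} \binom{n+k}{k}x^n t^k + 2\sum_{n \text{ odd}}\binom{n+k}{k}x^n t^k = \frac{3}{2}\cdot\frac{1}{1-x-t} - \frac{1}{2}\cdot\frac{1}{1+x-t}.
$$

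From here it is pure algebra: I would put the two fractions over the common denominator $(1-t)^2 - x^2 = (1-t-x)(1-t+x)$, simplify the numerator, subtract the correction term $x$, and check that the result matches $-x + \frac{-t^2+t+x(x+2)}{(t-1)^2-x^2}$. A useful sanity check along the way is to verify the low-order coefficients: the constant term should be $0$, the coefficient of $x$ should be $1$, the coefficient of $t$ should be $1$ (one snake of order $(0,1)$, namely $\binom{1}{1}=1$), the coefficient of $x^2$ should be $1$, and the coefficient of $xt$ should be $2\binom{2}{1}=4$. I do not expect any genuine obstacle here; the only mild subtlety is getting the finite correction constant right — one must confirm that after forming $\tfrac32 F(x) - \tfrac12 F(-x)$ the spurious contributions at $(n,k)=(0,0)$ and $(n,k)=(1,0)$ are corrected by exactly the single monomial $-x$, and not by an additional constant — and this is precisely what the coefficient checks confirm.
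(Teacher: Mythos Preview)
Your approach is exactly what the paper intends: the corollary is stated without proof, as an immediate computation from the displayed formula for $M_{n,k}^{(S)}$, and your plan (split into even/odd $n$, use $\sum_{n,k\geq 0}\binom{n+k}{k}x^n t^k = \frac{1}{1-x-t}$, then extract parity via $\tfrac12(F(x)\pm F(-x))$) is the natural way to carry it out.

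There is, however, a small bookkeeping slip. The correction term is $-1-x$, not $-x$ alone. At $(n,k)=(0,0)$ the ``even'' rule would give $\binom{0}{0}=1$ while the true value is $0$, and at $(n,k)=(1,0)$ the ``odd'' rule gives $2$ while the true value is $1$; so both a constant $-1$ and the monomial $-x$ must be subtracted. Concretely,
\[
\tfrac{3}{2}\,\frac{1}{1-x-t}-\tfrac{1}{2}\,\frac{1}{1+x-t}=\frac{1-t+2x}{(1-t)^2-x^2},
\]
and subtracting $1$ from this fraction (put it over the common denominator) yields exactly $\frac{-t^2+t+x(x+2)}{(t-1)^2-x^2}$; then the remaining $-x$ gives the stated formula. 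Your own sanity check on the constant term (which should be $0$) would have caught this: with only $-x$ subtracted, the constant term of your expression is $1$. Once this is fixed, the computation goes through verbatim.
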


Another useful class of meanders that can be easily described is the class of iterated snakes.
\begin{definition}
Let $\mathcal{IS}_0$ be a set of snakes. For each $k=1,2,\dots$ let $\mathcal{IS}_k$ be a set of singular meanders that can be obtained by the insertion of a snake into a meander from $\mathcal{IS}_{r}$ for $r<k$. We say that $M$ is an \emph{iterated snake} if $M\in \mathcal{IS}_k$ for some $k$.
\end{definition}

\begin{thm}
Let $M_{n,k}^{(IS)}$ be the number of equivalence classes of iterated snakes of order $(n,k)$, and let $\phi_{(IS)}(x,t) = \sum\limits_{n\geq 0, k\geq 0}M_{n,k}^{(IS)}x^n t^k$ be the generating function for these numbers. 
Then the following equation holds:
\begin{equation}\label{eq:iter snakes}
 \phi_{(IS)}(x,t)=\phi_{(S)}\left(x + \frac{\phi_{(IS)}^{(1)}(x,t) - x}{2},t\right).  
\end{equation}
\end{thm}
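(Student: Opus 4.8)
The plan is to set up a bijection between iterated snakes and certain configurations built from a single snake with iterated snakes inserted at each of its transverse intersection points, and then translate this bijection into the functional equation via the generating-function bookkeeping introduced above. First I would argue that every iterated snake $M$ that is not itself a snake has a canonical ``outermost'' snake: reversing the definition of $\mathcal{IS}_k$, any $M\in\mathcal{IS}_k\setminus\mathcal{IS}_0$ is obtained from some meander $M_0\in\mathcal{IS}_r$ by inserting a snake, but I would instead want to peel off snakes in the opposite order, exhibiting $M$ as a snake $S$ into which, at each transverse intersection point of $S$, an iterated snake has been inserted (via $\circ_i$), while the non-transverse points of $S$ carry no insertion — or rather, one must be careful that $\bullet_i$-insertions of even iterated snakes also occur. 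The key structural claim I would establish is: the map sending $M$ to the pair (its outermost snake $S$, the tuple of iterated submeanders hanging at the intersection points of $S$) is a bijection onto such tuples, where an iterated snake inserted at a transverse point of $S$ must have odd total transverse order (to match the parity condition $n'\equiv n''\bmod 2$ with $n''=1$ in Definition~\ref{def:insert}), and an iterated snake inserted at a non-transverse point must have even transverse order.

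Granting this bijection, the generating-function identity follows by substitution. A snake $S$ of order $(n,k)$ has $n$ transverse intersection points and $k$ non-transverse ones. Inserting at a transverse point an iterated snake of order $(2a+1,b)$ replaces one transverse point (contributing $x^1$) by a sub-configuration contributing $x^{2a+1}t^{b}$; summing over all choices, each transverse slot contributes a factor whose generating function is exactly the odd part $\phi_{(IS)}^{(1)}(x,t)$. Likewise each non-transverse slot, by the $\bullet_i$ operation which drops one non-transverse point (Theorem~\ref{thm:operad}, line~\eqref{operation 2} sends $k\mapsto k+k'-1$), contributes a factor $t^{-1}\phi_{(IS)}^{(2)}(x,t)$; but here I would need to check the exact normalization — the ``identity'' even iterated snake (the trivial meander of order $(0,0)$, or rather the snake of order $(0,1)$) plays the role of the unit, and this is precisely what makes the even slot contribute $\phi_{(IS)}^{(2)}(x,t)/t$ including the ``do nothing'' term. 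Substituting these per-slot factors into $\phi_{(S)}(x,t)=\sum M^{(S)}_{n,k}x^nt^k$ — replacing $x^n$ by the product of $n$ transverse-slot factors and $t^k$ by the product of $k$ non-transverse-slot factors — yields
$$
\phi_{(IS)}(x,t)=\phi_{(S)}\!\left(\phi_{(IS)}^{(1)}(x,t),\ \tfrac{\phi_{(IS)}^{(2)}(x,t)}{t}\cdot t\right),
$$
and after I verify that the even-argument slot actually reduces to the second argument being $t$ itself (because the even part of the substitution telescopes, the non-transverse structure of an iterated snake being rigid), the first argument simplifies: since $\phi_{(S)}$ is even-plus-odd and the odd generators of $\mathcal{IS}$ beyond the base snake all arise from $\circ_i$-insertions, one gets $\phi_{(IS)}^{(1)}(x,t)=x+\tfrac{1}{2}\bigl(\phi_{(IS)}^{(1)}(x,t)-x\bigr)\cdot(\text{something})$; matching this against the claimed right-hand side $\phi_{(S)}\!\bigl(x+\tfrac{\phi_{(IS)}^{(1)}(x,t)-x}{2},t\bigr)$ pins down that the transverse-slot factor is $x+\tfrac{1}{2}(\phi_{(IS)}^{(1)}-x)$ rather than $\phi_{(IS)}^{(1)}$ itself.

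The main obstacle, and the step I would spend the most care on, is exactly this factor of $\tfrac12$ and the appearance of $x+\tfrac{\phi_{(IS)}^{(1)}-x}{2}$ rather than $\phi_{(IS)}^{(1)}$ in the first slot. Its source is the two-to-one nature of the insertion/decomposition for odd snakes: a snake of odd order has permutation $(1,\dots,N)$ or its reverse $(N,\dots,1)$, so an iterated snake inserted at a transverse point can be attached in two reflected ways that may or may not be equivalent after gluing, and the canonical outermost-snake decomposition must choose a representative consistently. Concretely, when we reconstruct $M$ from $(S,\text{tuple})$ the reflection symmetry of each inserted odd iterated snake interacts with the orientation of the strand of $S$ at that point, so naively each odd iterated snake is counted twice except that the ``purely transverse'' part is counted once — which is precisely encoded by replacing $\phi_{(IS)}^{(1)}$ with $x+\tfrac{1}{2}(\phi_{(IS)}^{(1)}-x)$, i.e. keeping the order-one term $x$ with weight one and halving the rest. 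I would make this rigorous by analyzing the stabilizer of a snake under the relevant reflection and applying it slot-by-slot; the even slots have no such symmetry issue, which is why $t$ enters the second argument of $\phi_{(S)}$ unmodified. Once that symmetry count is nailed down, assembling the identity~\eqref{eq:iter snakes} is routine.
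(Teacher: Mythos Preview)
Your overall strategy --- view an iterated snake as an outermost snake with iterated snakes inserted at its intersection points, then turn this into a substitution identity for generating functions --- is the same as the paper's. But two steps in your proposal are genuine gaps, not just details to be filled in.

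\textbf{The non-transverse slots.} You bring in $\bullet_i$-insertions at the non-transverse points of the outer snake and write down a factor $t^{-1}\phi_{(IS)}^{(2)}(x,t)$ per such slot, then assert this ``reduces to $t$'' because the non-transverse structure is ``rigid''. That is not an argument, and in fact the equation you are aiming for has the second argument of $\phi_{(S)}$ equal to $t$ unmodified. The paper's construction simply does not use $\bullet_i$ at all here: iterated snakes are generated from an outer snake by $\circ_i$-insertions at transverse points only, and the non-transverse points of the outer snake are carried along untouched. If you allow $\bullet_i$-insertions as well you will over-generate and your bijection will fail; there is nothing to telescope.

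\textbf{The factor $\tfrac12$.} You attribute it to an orientation ambiguity in the attachment (``two reflected ways'' to glue, a stabilizer computation). But the operation $\circ_i$ is completely determined by matching the marked boundary points $p_i''\mapsto p_i'$, so there is no two-fold choice at the moment of insertion. The actual source of non-uniqueness is in the \emph{decomposition}: if you insert a direct snake (permutation $(1,2,\dots,N)$) into a snake you get another snake, so the ``outermost snake'' of a given iterated snake is not well-defined. The paper's fix is to forbid insertion of direct snakes; then every iterated snake has a unique outermost snake, and at each transverse slot one inserts either nothing (contributing $x$) or an odd iterated snake built purely from inverse snakes. By the direct/inverse reflection symmetry these account for exactly half of the nontrivial odd iterated snakes, giving the slot factor $x+\tfrac12\bigl(\phi_{(IS)}^{(1)}-x\bigr)$.

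Once those two points are handled correctly the argument is short. The paper packages it as a recursion $\phi_r(x,t)=\phi_{(S)}\bigl(x+\tfrac12(\phi_{r-1}^{(1)}-x),\,t\bigr)$ on the filtration $\mathcal{IS}_r$ and passes to the limit, which is a clean way to avoid having to argue directly that the outermost-snake decomposition terminates.
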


\begin{proof}
Let $\phi_{r}(x,t)$ be the generating function for $\mathcal{IS}_r$ and $\phi_{0}(x,t):=\phi_{(S)}(x,t)$. Singular meanders from the set $\mathcal{IS}_r$ are obtained by the insertion of singular meanders from the set $\mathcal{IS}_{r-1}$ into a snake. 
If we forbid the insertion of direct snakes\footnote{We say that a singular meander $M$ of total order $N$ is a \emph{direct} (resp. \emph{inverse}) \emph{snake} if its permutation is $(1,2,\dots,N)$ (resp. $(N,N-1,\dots,1)$.} into other snakes then each singular meander from $\mathcal{IS}_r$ is constructed in a unique way. Iterated snakes from $\mathcal{IS}_r$ that are obtained by a sequence of the insertions of inverse snakes into another inverse snakes have the generating function 
$$
\frac{-x+\sum\limits_{n\geq 0, k\geq 0}M_{2n+1,k}^{(IS_r)}x^{2n+1}t^k}{2} = \frac{\phi_{r}^{(1)}(x,t) - x}{2}.
$$ Thus we have the following equality:
\begin{equation}\label{eq:iter snakes 2}
\phi_{r}(x,t)=\phi_{(S)}\left(x + \frac{\phi_{r-1}^{(1)}(x,t) - x}{2},t\right).
\end{equation}
The passage to the limit in~\eqref{eq:iter snakes 2} gives us the equation~\eqref{eq:iter snakes}.
\end{proof}

\begin{remark}
The solution of~\eqref{eq:iter snakes} can be found directly. However, the exact formula is too cumbersome and we do not put it here. 
\end{remark}
\begin{remark}
The numbers $M_{n,0}^{(IS)}$ are the same as the numbers of P-graphs with $2n$ edges defined in the work~\cite{R86}.
\end{remark}

\subsection{Irreducible singular meanders}
\begin{definition}
We say that a singular meander $M$ of total order $N$ is \emph{irreducible} if there are precisely $N+2$ submeanders of $M$ that are pairwise non-equivalent with respect to $M$.
\end{definition}
\begin{remark}
Examples of irreducible singular meanders are presented in Fig.~\ref{fig: example of irreducible}.
\end{remark}
We denote by $M^{(IR)}_{n,k}$ the number of equivalence classes of irreducible meanders of order $(n,k)$. We also denote by $\phi_{(IR)}(x,t) := \sum\limits_{n\geq 0, k \geq 0}M^{(IR)}_{n,k}x^n t^k$ the generating function for these numbers.

\begin{figure}[h]
\begin{tikzpicture}[scale = 1.3]
\draw[thick] (0, 0) to (3, 0);
\draw[ultra thick] (0.0501479, 0.384615) 
 to[out = 0, in = 90, distance = 8.69979] (0.692308, 0.1)
 to[out = -90, in = -90, distance = 2.89993] (0.923077, 0.1)
 to[out = 90, in = 90, distance = 8.69979] (1.61538, 0)
 to[out = -90, in = -90, distance = 8.69979] (2.30769, -0.1)
 to[out = 90, in = 90, distance = 2.89993] (2.53846, -0.1)
 to[out = -90, in = -90, distance = 14.4997] (1.38462, -0.1)
 to[out = 90, in = 90, distance = 2.89993] (1.15385, -0.1)
 to[out = -90, in = -90, distance = 8.69979] (0.461538, -0.1)
 to[out = 90, in = 90, distance = 2.89993] (0.230769, -0.1)
 to[out = -90, in = -90, distance = 31.8992] (2.76923, 0)
 to[out = 90, in = 90, distance = 8.69979] (2.07692, 0.1)
 to[out = -90, in = -90, distance = 2.89993] (1.84615, 0.1)
to[out = 90, in = 180, distance = 14.4997] (2.94985, 0.384615);
\draw[help lines] (1.5, 0) circle (1.5);
\draw[fill] (0.0501479, 0.384615) circle (0.05);
\draw[fill] (2.94985, 0.384615) circle (0.05);
\draw[fill] (0, 0) circle (0.05);
\draw[fill] (3, 0) circle (0.05);
\end{tikzpicture}
\hspace{1.2cm}
\begin{tikzpicture}[scale = 1.3]
\draw[thick] (0, 0) to (3, 0);
\draw[ultra thick] (0.17335, 0.7) 
to[out = 0, in = 90, distance = 25.3894] (2.1, 0.12)
 to[out = -90, in = -90, distance = 3.76991] (1.8, 0.12)
 to[out = 90, in = 90, distance = 18.8496] (0.3, 0.120)
 to[out = -90, in = -90, distance = 3.76991] (0.6, 0.120)
 to[out = 90, in = 90, distance = 11.3097] (1.5, 0)
 to[out = -90, in = -90, distance = 11.3097] (2.4, -0.120)
 to[out = 90, in = 90, distance = 3.76991] (2.7, -0.120)
 to[out = -90, in = -90, distance = 18.8496] (1.2, -0.120)
 to[out = 90, in = 90, distance = 3.76991] (0.9, -0.120)
to[out = -90, in = 180, distance = 25.3894] (2.82665, -0.7);
\draw[help lines] (1.5, 0) circle (1.5);
\draw[fill] (0.17335, 0.7) circle (0.05);
\draw[fill] (2.82665, -0.7) circle (0.05);
\draw[fill] (0, 0) circle (0.05);
\draw[fill] (3, 0) circle (0.05);
\end{tikzpicture}
\caption{Examples of ireducible meanders}
\label{fig: example of irreducible}
\end{figure}
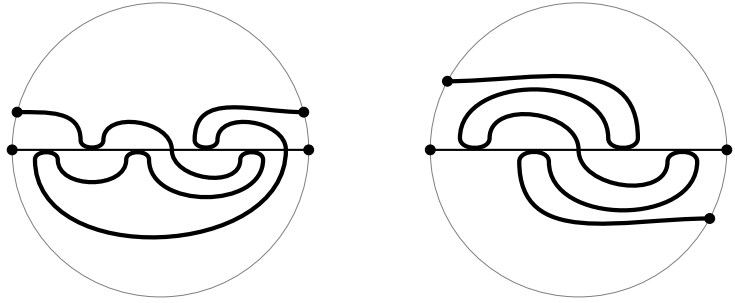

\begin{remark}
There are no irreducible meanders of order $(n,0)$ for $n\geq 0$ as there are always at least two consecutive numbers in the permutation of a non-singular meander.   
\end{remark}

The author does not know any suitable way to enumerate irreducible meanders. Several estimates on the growth rate were obtained in the work~\cite{BM21}. The numbers $M^{(IR)}_{n,k}$ for $n+2k\leq 36 $ where calculated in the work~\cite{B21}. 

\section{Decomposition of singular meanders}
\begin{thm}\label{thm:decompose}
Each singular meander is canonically constructed from irreducible meanders and iterated snakes using the operations~\eqref{operation 1} and \eqref{operation 2}.
\end{thm}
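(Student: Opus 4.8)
The plan is to prove Theorem~\ref{thm:decompose} by strong induction on the total order $N = n+k$ of a singular meander $M$, using the number of pairwise non-equivalent submeanders of $M$ (with respect to $M$) as the measure that distinguishes the ``atoms'' (irreducible meanders and snakes) from the composite meanders. The base case $N=0$ is trivial: the unique meander of total order zero is a snake, hence an iterated snake, and there is nothing to decompose. For the inductive step, fix $M$ of total order $N\geq 1$ and count its submeanders up to equivalence with respect to $M$: by the definitions given in the excerpt, if this count is $\tfrac{N(N+1)}{2}+1$ then $M$ is a snake (base atom), and if it is $N+2$ then $M$ is irreducible (base atom); in either case $M$ is already in the desired form. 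Otherwise the count lies strictly between $N+2$ and $\tfrac{N(N+1)}{2}+1$, and the goal is to exhibit a nontrivial submeander $M|_i$ (consisting of a single intersection point, transverse or non-transverse, with label $i$) together with a meander $M'$ of strictly smaller total order such that $M = M|_i \circ_i M'$ or $M = M|_i \bullet_i M'$; then apply the induction hypothesis to both factors (the first factor has total order $1$, the second has total order $<N$) and conclude.

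The structural heart of the argument is the extraction step: given $M$ that is neither a snake nor irreducible, find a proper submeander $M''$ of $M$, of order $(n'',k'')$ with $1 \le n''+k''$, whose ``outer'' complement in $M$ is again a meander $M|_i$ of total order $1$ sitting at a single intersection point. I would mimic the geometric decomposition of~\cite{B21}: traverse $l$ from $p_2$ and look at the sequence of intersection labels; the failure of $M$ to be a snake forces the permutation associated to $M$ to be non-monotone, and the failure to be irreducible forces the existence of a nontrivial ``block'' — a sub-interval of consecutive positions along $l$ whose image under the permutation is again an interval — which is precisely the combinatorial shadow of a submeander that can be collapsed to a single crossing. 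Cutting along the boundary of a disk $D''$ enclosing such a block yields $M''$ and, in the quotient, the one-crossing meander $M|_i$; the parity condition $n' \equiv n'' \bmod 2$ required by Definition~\ref{def:insert} is automatic because $M|_i$ has exactly one transverse (or one non-transverse) crossing, matching the colour of the operation. One must check that the insertion operation inverts this cut, i.e.\ that $M|_i \circ_i M''$ (resp.\ $M|_i \bullet_i M''$) recovers $M$ up to equivalence — this is immediate from the definitions of $\circ_i$, $\bullet_i$, and the insertion in Definition~\ref{def:insert}.

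For canonicity, I would impose a normalization on the choice of block: among all nontrivial blocks, take the inclusion-maximal ones, and among those a canonical ordering (say, by smallest starting label), exactly as in~\cite{B21}; together with the convention already used there that forbids inserting a direct snake into another snake (so that iterated snakes are grouped into a single atom rather than re-decomposed), this makes the decomposition tree unique. One then argues that two different extraction choices lead to decomposition trees related by the operad axioms (associativity and equivariance of $\circ_i$, $\bullet_i$ established in Theorem~\ref{thm:operad}), so the canonical normal form is well-defined on equivalence classes. The main obstacle I anticipate is precisely the bookkeeping of this canonicity: showing that the maximal-block decomposition does not depend on the order in which nested or disjoint blocks are extracted, and that the iterated-snake atoms are correctly isolated (rather than being partially peeled apart into individual snakes), will require the same careful case analysis on permutation patterns as in the non-singular case of~\cite{B21}, now with the extra non-transverse labels and the two colours of insertion to track in parallel. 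Everything else — the induction, the parity matching, the reconstruction identity — is routine once the extraction lemma is in place.
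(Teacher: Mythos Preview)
Your inductive step misidentifies the two arguments of the insertion. You write $M = M|_i \circ_i M'$ with $M|_i$ the single-crossing submeander and $M'$ of total order $<N$, and in the second paragraph you say that collapsing the block $D''$ produces ``in the quotient, the one-crossing meander $M|_i$''. But in $A\circ_i B$ the first argument $A$ is the ambient meander and $B$ is the piece inserted at its $i$-th crossing; the single-crossing meander is the operadic unit, so $M|_i\circ_1 M'\cong M'$ and your equation forces $M'\cong M$ --- no reduction in total order at all. What cutting out a nontrivial proper block $M''$ of total order $N''$ actually produces is a quotient $\hat M$ of total order $N-N''+1$ (in general much larger than $1$) together with the identity $M=\hat M\circ_j M''$ (or $\hat M\bullet_j M''$), where $j$ is the label in $\hat M$ of the crossing that replaced the block. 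It is to $\hat M$ and $M''$, each of total order strictly between $1$ and $N$, that the induction hypothesis should be applied. With this correction your recursive scheme becomes viable, though you still owe the uniqueness argument you yourself flag as the main obstacle.

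For comparison, the paper's proof does not recurse on a single binary split. It declares the \emph{prime factors} of $M$ to be its irreducible submeanders together with its inclusion-maximal iterated-snake submeanders, collapses all of them at once to obtain a smaller meander $M_1$, and iterates on $M_1$; canonicity is then the observation that the only possible ambiguity in reassembly --- inserting an iterated snake into another iterated snake --- is already ruled out by the maximality clause in the definition of prime factor. This all-at-once cut sidesteps the order-of-extraction bookkeeping that your (corrected) one-block-at-a-time approach would require.
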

\begin{proof}
We say that a submeander $M'$ of a singular meander $M$ is \emph{a prime factor} if either $M'$ is an irreducible singular meander, or $M'$ is an iterated snake and there are no submeander $M''$ of $M$ such that $M''$ is an iterated snake and $M' \prec M'' \prec M$. 

There is a canonical way to decompose an arbitrary singular meander into prime factors. We start with a singular meander $M_0$, we cut all its prime factors and obtain a singular meander $M_1$, then we cut all prime factors of $M_1$ and so on, until we get an irreducible singular meander or an iterated snake.

If we are going to construct any singular meanders from irreducible singular meanders and iterated snakes in a unique way we only need to permit insertions of an iterated snake into another iterated snake. 
\end{proof}

We can use this theorem to obtain an equation on the generating function for singular meanders. 

\begin{corollary}\label{cor:gen function equation}
Let $\psi(x,t) = \sum\limits_{n\geq 0, k \geq 0} M_{n,k}x^n t^k$ be the generating function for the numbers of equivalence classes of singular meanders. Then the following equation holds:
\begin{align*}
    \psi(x,t) = \left(\phi_{(IR)}\boxdot \psi\right)(x,t) + \left(\phi_{(IS)}\boxdot\left(x+t+\left(\phi_{(IR)}\boxdot \psi\right)\right)\right)(x,t).
\end{align*}
\end{corollary}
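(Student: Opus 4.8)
The plan is to translate the canonical decomposition of Theorem~\ref{thm:decompose} into a bookkeeping identity on generating functions, using the $\boxdot$-operation precisely to account for the parity constraint built into the operad structure. Recall that an operation $\circ_i$ inserts a meander of odd total ``$x$-order'' $2n'+1$ at a transverse crossing, while $\bullet_i$ inserts one of even order $2n'$ at a non-transverse crossing; the notation $(f\boxdot g)(x,t) = f(g^{(1)}(x,t),g^{(2)}(x,t))$ is exactly the substitution rule that feeds the odd part of $g$ into the ``transverse slots'' and the even part of $g$ into the ``non-transverse slots'' of $f$. So the first step is to make this dictionary precise: if a meander class is built by taking an ``outer'' meander counted by $f$ and filling each of its crossings by an arbitrary singular meander counted by $g$ (a transverse crossing getting an odd-order meander, a non-transverse crossing an even-order one), then the number of resulting classes, weighted by $x^n t^k$, is $(f \boxdot g)(x,t)$. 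This needs a short compatibility check that the orders add correctly: inserting at a transverse crossing $i$ replaces that single crossing (contributing $x^1$) by a submeander of order $(2n',k')$ (contributing $x^{2n'+1}t^{k'}$ after gluing), and inserting at a non-transverse crossing replaces a $t^1$ factor by $x^{2n'}t^{k'}$, i.e.\ loses one $t$ — which matches the shift $k+k'-1$ in~\eqref{operation 2}, and one checks this is already absorbed correctly by writing the substitution in terms of $g^{(1)}, g^{(2)}$.

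Next I would unwind the decomposition procedure in the proof of Theorem~\ref{thm:decompose}. A singular meander $M$ is obtained from its ``top layer'' by filling in submeanders at crossings, and the top layer is either an irreducible meander or an iterated snake (and in the iterated-snake case, to get uniqueness, the filling meanders at crossings must not themselves be iterated snakes — this is the restriction ``permit insertions of an iterated snake into another iterated snake'' read in reverse). This splits $\psi$ as a sum of two terms according to the nature of the prime factor sitting at the top. For the irreducible case: the top layer ranges over irreducible meanders (generating function $\phi_{(IR)}$) and each of its crossings is filled by an arbitrary singular meander (generating function $\psi$), giving the term $(\phi_{(IR)} \boxdot \psi)(x,t)$. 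For the iterated-snake case: the top layer ranges over iterated snakes ($\phi_{(IS)}$), but now each crossing is filled by a singular meander that is \emph{not} an iterated snake, i.e.\ one that is either trivial or has an irreducible prime factor on top — its generating function is $x + t + (\phi_{(IR)} \boxdot \psi)$, where the $x + t$ accounts for the two ``trivial'' one-crossing meanders (one transverse of order $(1,0)$, one non-transverse of order $(0,1)$) that can be inserted without producing an iterated snake. Substituting, this term becomes $\bigl(\phi_{(IS)} \boxdot (x + t + (\phi_{(IR)}\boxdot\psi))\bigr)(x,t)$, and adding the two contributions gives the claimed equation.

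The main obstacle I expect is justifying that these two cases partition the set of all singular meanders \emph{without overlap and without omission}, and that within each case the filling is genuinely free (no hidden relations among the submeanders inserted at distinct crossings). This is really where Theorem~\ref{thm:decompose}'s claim of a \emph{canonical} decomposition does the work: uniqueness of the prime-factor decomposition is what guarantees that the correspondence between a singular meander and (top prime factor, tuple of fillings) is a bijection, so that the counts multiply and sum cleanly. I would therefore spend most of the argument carefully citing Theorem~\ref{thm:decompose} to pin down (a) that every singular meander has a well-defined top layer which is a unique prime factor, (b) that the fillings at the crossings are arbitrary singular meanders subject only to the parity constraint and, in the iterated-snake case, the ``not an iterated snake'' constraint, and (c) that distinct fillings give inequivalent results — the last point following because the submeanders inserted at distinct crossings are pairwise non-equivalent with respect to $M$ and can be recovered from $M$. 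The remaining parity bookkeeping — that odd-order pieces land in transverse slots and even-order pieces in non-transverse slots, which is exactly condition (ii) $n' \equiv n'' \bmod 2$ in Definition~\ref{def:insert} — is precisely encapsulated by the $(1)$/$(2)$ split in the definition of $\boxdot$, so once the bijection is in place the identity is immediate.
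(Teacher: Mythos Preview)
The paper states Corollary~\ref{cor:gen function equation} without proof, as an immediate consequence of Theorem~\ref{thm:decompose}; your proposal supplies exactly the intended argument, translating the canonical prime-factor decomposition into a generating-function identity via the $\boxdot$-substitution and correctly handling the parity bookkeeping and the uniqueness constraint on fillings below an iterated-snake top. The only slip is cosmetic: the trivial one-crossing meanders of order $(1,0)$ and $(0,1)$ \emph{are} snakes, so the phrase ``not an iterated snake'' is not quite right---but you immediately give the correct description (``trivial or has an irreducible prime factor on top''), which is precisely what the term $x+t+(\phi_{(IR)}\boxdot\psi)$ encodes.
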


\bibliography{biblio}

\begin{thebibliography}{Yau16}

\bibitem[Bel21]{B21}
Y.~Belousov.
\newblock Irreducible meanders.
\newblock 2021, 2112.10289.

\bibitem[BM21]{BM21}
Y.~Belousov and A.~Malyutin.
\newblock On irreducible meanders growth rate.
\newblock 2021, 2107.06837.

\bibitem[LV12]{LV12}
J.-L. Loday and B.~Vallette.
\newblock {\em Algebraic operads}, volume 346 of {\em Grundlehren der
  mathematischen Wissenschaften [Fundamental Principles of Mathematical
  Sciences]}.
\newblock Springer, Heidelberg, 2012.

\bibitem[Rea86]{R86}
R.~C. Read.
\newblock On the enumeration of a class of plane multigraphs.
\newblock {\em Aequationes Math.}, 31(1):47--63, 1986.

\bibitem[Yau16]{Y16}
D.~Yau.
\newblock {\em Colored operads}, volume 170 of {\em Graduate Studies in
  Mathematics}.
\newblock American Mathematical Society, Providence, RI, 2016.

\end{thebibliography}
\bibliographystyle{halpha.bst}
\end{document}